\newcommand{\BEAS}{\begin{eqnarray*}}
\newcommand{\EEAS}{\end{eqnarray*}}
\newcommand{\BEA}{\begin{eqnarray}}
\newcommand{\EEA}{\end{eqnarray}}
\newcommand{\BEQ}{\begin{equation}}
\newcommand{\EEQ}{\end{equation}}
\newcommand{\BIT}{\begin{itemize}}
\newcommand{\EIT}{\end{itemize}}
\newcommand{\BNUM}{\begin{enumerate}}
\newcommand{\ENUM}{\end{enumerate}}
\newcommand{\BA}{\begin{array}}
\newcommand{\EA}{\end{array}}
\newcommand{\ones}{\mathbf 1}
\newcommand{\reals}{{\mathbf{R}}}
\newcommand{\range}{{\mathcal R}}
\newcommand{\card}{\mathop{\bf card}}
\newcommand{\QED}{~~\rule[-1pt]{6pt}{6pt}}
\newcommand{\dsp}{\displaystyle}
\newtheorem{theorem}{Theorem}
\newtheorem{lemma}{Lemma}
\newcounter{exno}
\newenvironment{proof}{\emph{Proof.}}
\def\Epsilon{\mathcal{E}}
\begin{document}

\title{Robust sketching for multiple square-root LASSO problems}
\author{Vu Pham, Laurent El Ghaoui, Arturo Fernandez\\
University of California, Berkeley\\
\texttt{\{vu,elghaoui\}@eecs.berkeley.edu, arturof@berkeley.edu}}
\date{}
\maketitle

\begin{abstract}
Many learning tasks, such as cross-validation, parameter search, or leave-one-out analysis, involve multiple instances of similar problems, each instance sharing a large part of learning data with the others. We introduce a robust framework for solving multiple square-root LASSO problems, based on a sketch of the learning data that uses low-rank approximations. Our approach allows a dramatic reduction in computational effort, in effect reducing the number of observations from $m$ (the number of observations to start with) to $k$ (the number of singular values retained in the low-rank model), while not sacrificing---sometimes even improving---the statistical performance. Theoretical analysis, as well as  numerical experiments on both synthetic and real data, illustrate the efficiency of the method in large scale applications.
\end{abstract}

\section{Introduction}
In many practical applications, learning tasks arise not in isolation, but as multiple instances of similar problems. A typical instance is when the same problem has to be solved, but with many different values of a regularization parameter. Cross-validation also involves a set of learning problems where the different ``design matrices'' are very close to each other, all being a low-rank perturbation of the same data matrix. Other examples of such multiple instances arise in sparse inverse covariance estimation with the LASSO~(\cite{friedman2008sparse}), or in robust subspace clustering~(\cite{soltanolkotabi2014robust}). In such applications, it makes sense to spend processing time on the common part of the problems, in order to compress it in certain way, and speed up the overall computation.

In this paper we propose an approach to multiple-instance square root LASSO based on ``robust sketching'', where the data matrix of an optimization problem is approximated by a sketch, that is, a simpler matrix that preserves some property of interest, and on which computations can be performed much faster than with the original.
% One practical application is from a query-based text analytics system (at \url{statnews.org}), where users request's generate many queries over a single data set, with each query inola real system needs to solve a large number of learning instances with many thousands of queries over a second. Figure \ref{fig:statnews_application} illustrates a timeline history of query term ``LASSO'' on the corpus of all Machine Learning publications. Each time period shows the result of a sparse learning problem with a fraction of training data being held out (\cite{el2013understanding}). The key challenge in implementing a real text analytic system is to solve a large number of large-scale problems under thousands of queries very fast.
Our focus is a square-root LASSO problem:
\begin{equation}\label{eq:original-lasso}
\begin{array}{ll}
\dsp \min_{w \in \reals^n} \left \| X^T w - y \right \|_2 + \lambda \| w \|_1
\end{array}
\end{equation}
where $X \in \reals^{n \times m}$ and $y \in \reals^{m}$. Square-root LASSO has pivotal recovery properties; also, solving a square-root LASSO problem is as fast as solving an equivalent LASSO problem with both first-order and second order methods~(\cite{belloni2011square}). We chose the square-root version of the LASSO to make the derivations simpler; these derivations can also be adapted to the original LASSO problem, in which the loss function is squared.

In real-life data sets, the number of features $n$ and the number of observations $m$ can be both very large. A key observation is that real data often has structure that we can exploit. Figure~\ref{fig:singular_values_from_real_data} shows that real-life text data sets are often low-rank, or can be well-approximated by low-rank structures.

\afterpage{
\begin{figure}[h]
\begin{center}
\includegraphics[width=0.36\textwidth]{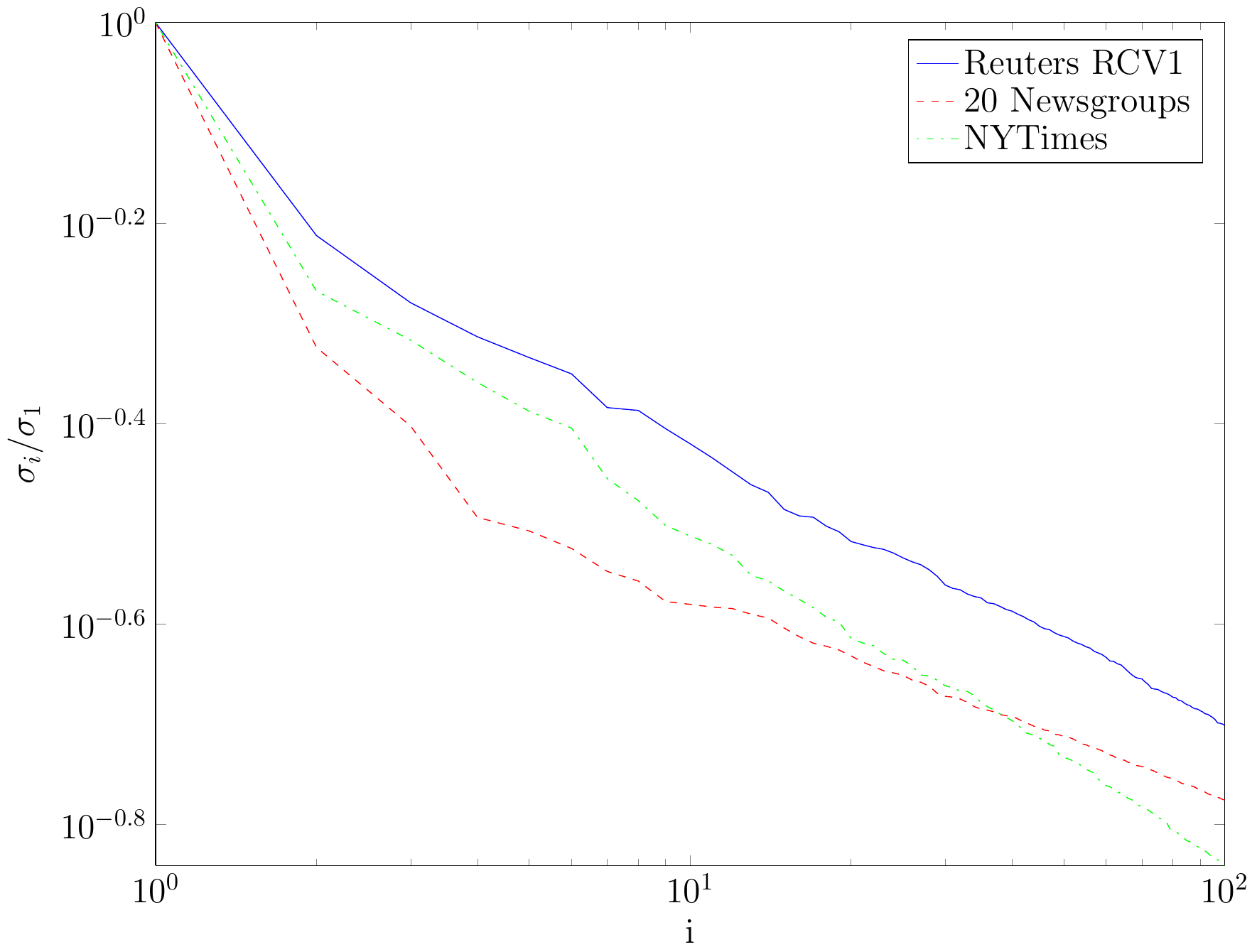}
\end{center}
\caption{Graphs of the top 100 singular values from real-life text data sets.}
\label{fig:singular_values_from_real_data}
\end{figure}
}

% \begin{figure*}[t]
% \centering
% % \subfigure[20 News groups data set]{
% %   \includegraphics[width=0.31\textwidth]{}
% %   \label{fig:gisette_f1_against_k}
% % }
% \subfigure[RCV1 data set]{
% \includegraphics[width=0.31\textwidth]{}
% \label{fig:rcv1_singular_values}
% }
% \subfigure[20 Newsgroups data set]{
% \includegraphics[width=0.31\textwidth]{figures/20NewsGroups_singular_values}
% \label{fig:reuters_singular_values}
% }
% \subfigure[New York Times articles]{
% \includegraphics[width=0.31\textwidth]{}
% \label{fig:nytimes_singular_values}
% }
% % \subfigure[New York Times articles]{
% % \includegraphics[width=0.31\textwidth]{}
% % \label{fig:nytimes_singular_values}
% % }
% \caption{Graphs of the top 100 singular values from real-life data sets.}
% \label{fig:singular_values_from_real_data}
% \end{figure*}

\textbf{Contribution.} Our objective is to solve multiple instances of square-root LASSO fast, each instance being a small modification to the same design matrix. Our approach is to first spend computational efforts in finding a low-rank sketch of the full data. With this sketch, we propose a robust model that takes into account the approximation error, and explain how to solve that approximate problem one order of magnitude faster, in effect reducing the number of observations from $m$ (the number of observations to start with) to $k$ (the number of singular values retained in the low-rank model). Together with our proposed model, we can perform cross validation, for example, an order of magnitude faster than the traditional method, with the sketching computation included in our approach. % This observation allows us to quickly compute the low-rank approximation for each instance's data from the original sketch.

This paper employs low-rank sketching for data approximation phase, for which an extensive body of algorithmic knowledge exists, including power method, random projection and random sampling, or Nystr{\"o}m methods (\cite{Miranian20031, drineas2005nystrom, drineas2006fast, Halko:2011:FSR:2078879.2078881, mahoney2011randomized, liberty2013simple}). Our framework works with any approximation algorithms, thus provides flexibility when working with different types of sketching methods, and remains highly scalable in learning tasks.

\textbf{Related work.} Solving multiple learning problems has been widely studied in the literature, mostly in the problem of computing the regularization path (\cite{park2007l1}). The main task in this problem is to compute the full solutions under different regularization parameters. The most popular approach includes the warm-start technique, which was first proposed in specific optimization algorithms (e.g. \cite{yildirim2002warm}), then applied in various statistical learning models, for example in (\cite{kim2007interior, koh2007interior, garrigues2009homotopy}). Recent works (\cite{tsai2014incremental}) show strong interest in incremental and decremental training, and employ the same warm-start technique. These techniques are all very specific to the multiple learning task at hand, and require developing a specific algorithm for each case.

In our approach, we propose a generic, robust, and algorithm-independent model for solving multiple LASSO problems fast. Our model can therefore be implemented with any generic convex solver, providing theoretical guarantees in computational savings while not sacrificing statistical performance.

\textbf{Organization.} The structure of our paper is as follows. In Section~\ref{sec:robust-square-root-LASSO} we propose the robust square-root LASSO with sketched data. In Section~\ref{sec:safe-feature-elimination} we present a simple method to reduce the dimension of the problem. Section~\ref{sec:non-robust-square-root-lasso} studies the effects of a non-robust framework. Section~\ref{sec:complexity-analysis} provides a theoretical complexity analysis and we conclude our paper with numerical experiments in Section~\ref{sec:numerical-results}.

\section{Robust Square-root LASSO}
\label{sec:robust-square-root-LASSO}

\paragraph{Low-rank elastic net.} % (fold)
\label{par:low_rank_elastic_net}
Assume we are given $\hat{X}$ as a sketch to the data matrix $X$, the robust square-root LASSO is defined as follows:
\begin{equation}\label{eq:lasso-in-robust-form}
\begin{array}{ll}
\phi_{\epsilon, \lambda} (\hat{X}) & := \dsp \min_{w \in \reals^n} \max_{X: \, \| X - \hat{X} \| \leq \epsilon} \: \| X^T w - y \|_2 + \lambda \| w \|_1 \\
& = \dsp \min_{w \in \reals^n} \max_{\| \Delta \| \leq \epsilon} \: \| (\hat{X} + \Delta)^T w - y \|_2 + \lambda \| w \|_1
\end{array}
\end{equation}
where $X, \hat{X} \in \reals^{n \times m}$, $y \in \reals^m, y \neq 0$ and both $\lambda \geq 0, \epsilon \geq 0$ are given as our parameters. \cite{el1997robust} has shown that
\begin{equation*}
\begin{array}{ll}
\dsp \max_{\| \Delta \| \leq \epsilon} \: \| (\hat{X} + \Delta)^T w - y \|_2 \leq \| \hat{X}^T w - y \|_2 + \epsilon \| w \|_2
\end{array}
\end{equation*}
and the equality holds with the choice of $\Delta$ as
\begin{equation*}
\begin{array}{lll}
\Delta & := & \epsilon u v^T \\
u & := & \left\{\begin{matrix}
\frac{\hat{X}^Tw - y}{ \| \hat{X}^Tw - y \|} & \mbox{if } \hat{X}^Tw \neq y \\
\mbox{any unit-norm vector} & \mbox{otherwise}
\end{matrix}\right. \\
v & := & \left\{\begin{matrix}
\frac{w}{ \| w \|} & \mbox{if } w \neq 0 \\
\mbox{any unit-norm vector} & \mbox{otherwise}
\end{matrix}\right.
\end{array}
\end{equation*}
We also have $\mathbf{rank}(\Delta) = 1$ and $\| \Delta \|_F = \| \Delta \| = 1$, which implies $\Delta$ is the worst-case perturbation for both the Frobenius and maximum singular value norm. Problem ($\ref{eq:lasso-in-robust-form}$) can therefore be rewritten as:
\begin{equation}\label{eq:robust-lasso}
\begin{array}{ll}
\dsp \phi_{\epsilon,\lambda}(\hat{X}) = \dsp\min_{w \in \reals^n} \left \| \hat{X}^T w - y \right \|_2 + \epsilon \| w \|_2 + \lambda \| w \|_1
\end{array}
\end{equation}
Note the presence of an ``elastic net'' term, directly imputable to the error on the design matrix.
% paragraph low_rank_elastic_net (end)

In our model, we employ the low-rank approximation of the original data matrix from any sketching algorithm: $\hat{X} = P Q^T$, where $P \in \reals^{n \times k}$, $Q \in \reals^{m \times k}$, $P$ and $Q$ have full rank $k$ with $k \ll \min \{m, n \}$. When the full data matrix $X \in \reals^{n \times m}$ is approximated by $X \simeq P Q^T$, for leave-one-out analysis, the low rank approximation of the ``design matrix'' can be quickly computed by: $X_{\setminus i} \simeq P Q_{\setminus i}^T$ where $\setminus i$ means leaving out the $i$-th observation.

\paragraph{Solving the problem fast.} % (fold)
\label{par:solving_the_problem_fast}
We now turn to a fast solution to the low-rank elastic net problem~(\ref{eq:robust-lasso}), with $\hat{X} = PQ^T$. This ``primal'' problem is convex, and its dual is:
\begin{eqnarray*}
\lefteqn{\dsp \phi_{\lambda, \epsilon} =}\\
&& \dsp\min_{w, z \in \reals^n} \left \| Q z - y \right \|_2 + \epsilon \| w \|_2 + \lambda \| w \|_1 \; : \; z = P^T w \\
&= & \dsp\min_{w, z \in \reals^n} \max_{u \in \reals^k} \left \| Q z - y \right \|_2 + \epsilon \| w \|_2 + \lambda \| w \|_1 \\
&& + u^T (z - P^T w) \\
&= & \dsp \max_{u \in \reals^k} \min_{w, z \in \reals^n} \left \| Q z - y \right \|_2 + u^T z + \\
&& \epsilon \| w \|_2 + \lambda \| w \|_1 - (P u)^T w \\
&= & \dsp \max_{u \in \reals^k} f_1(u) + f_2(u) ,
\end{eqnarray*}
where $f_1(u) := \dsp \min_{z \in \reals^n} \left \| Q z - y \right \|_2 + u^T z$ and \\
$f_2(u) := \dsp \min_{w \in \reals^n} \epsilon \| w \|_2 + \lambda \| w \|_1 - (P u)^T w$.

\emph{First subproblem.} Consider the first term in $f_1(u)$:
\begin{equation*}
\begin{array}{lll}
\dsp \| Qz - y \|^2_2 & = & z^T Q^T Q z - 2 y^T Qz + y^T y \\
& = & \bar{z}^T \bar{z} + 2 c^T \bar{z} + y^T y \\
& & \mbox{where} \; \bar{z} := (Q^T Q)^{1/2} z \in \reals^n \\
& & \mbox{and} \; c := (Q^T Q)^{-1/2} Q^T y \in \reals^k \\
& = & \| \bar{z} - c \|^2_2 + y^T y - c^T c.
\end{array}
\end{equation*}
Note that $c^T c = y^T Q (Q^T Q)^{-1} Q^T y \leq y^T y$ since $Q (Q^T Q)^{-1} Q^T \preceq I$ is the projection matrix onto $\range(Q)$. Letting $s := \sqrt{y^T y - c^T c} \geq 0$ gives
\begin{equation*}
\begin{array}{lll}
f_1(u) & := & \dsp \min_{z} \| Qz - y \|_2 + u^T z \\
& = & \dsp \min_{z} \sqrt{\| \bar{z} - c \|_2^2 + s^2} + \bar{u}^T \bar{z} \\
& & \mbox{by letting} \; \bar{u} := (Q^T Q)^{-1/2} u \\
& = & \bar{u}^T c + \dsp \min_{x} \sqrt{\|x\|_2^2 + s^2} - \bar{u}^T x \\
& & \mbox{by letting} \; x := c - \bar{z}.
\end{array}
\end{equation*}
Now consider the second term $\dsp \min_{x} \sqrt{\|x\|_2^2 + s^2} - \bar{u}^T x$. The optimal $x^*$ must be in the direction of $\bar{u}$. Letting $x := \alpha \bar{u}$, $\alpha \in \reals$, we have the expression
\[
\dsp \min_{\alpha \in \reals} \sqrt{\alpha^2 \| \bar{u} \|_2^2 + s^2} - \alpha \|\bar{u} \|_2^2
\]
When $\| \bar{u} \|_2 \geq 1$, the problem is unbounded below.
When $\| \bar{u} \|_2 < 1$, the optimal solution is $\alpha^* = \frac{s}{\sqrt{1 - \| \bar{u} \|^2_2}}$ and the optimal value is thus $s \sqrt{1 - \| \bar{u} \|^2_2}$. The closed-form expression for $f_1(u)$ is therefore:
\begin{equation} \label{eq:first_subproblem}
\begin{array}{lll}
f_1(u) & = & \dsp \bar{u}^T c + \dsp \min_{x} \sqrt{\|x\|_2^2 + s^2} - \bar{u}^T x \\
& = & \bar{u}^T c + s \sqrt{1 - \| \bar{u} \|^2_2} \\
& = & u^T (Q^T Q)^{-1/2} c + s \sqrt{1 - u^T (Q^T Q)^{-1} u} \\
& = & u^T K^{-1/2} c + s \sqrt{1 - u^T K^{-1} u} \\
& & \mbox{by letting} \; K := Q^T Q.
\end{array}
\end{equation}

\emph{Second subproblem.} Consider the function $f_2(u) := \dsp \min_{w \in \reals^n} \epsilon \| w \|_2 + \lambda \| w \|_1 - (P u)^T w$. We observe that the objective function is homogeneous. Strong duality gives:
\begin{equation} \label{eq:second_subproblem}
\begin{array}{lll}
f_2(u) & = & \dsp \min_{w} \max_{v, r} r^T w + v^T w - (P u)^T w \\
& & \mbox{s.t.} \quad \|r\|_2 \leq \epsilon, \| v \|_\infty \leq \lambda \\
& = & \dsp \max_{v, r} \min_{w} (r + v - Pu)^T w \\
& & \mbox{s.t.} \quad \|r\|_2 \leq \epsilon, \| v \|_\infty \leq \lambda \\
& = & \dsp \max_{v, r} 0 \\
& & \mbox{s.t.} \quad \|r\|_2 \leq \epsilon, \| v \|_\infty \leq \lambda, Pu = v + r \\
\end{array}
\end{equation}
Hence $f_2(u) = 0$ if there exists $v, r \in \reals^n$ satisfying the constraints. Otherwise $f_2(u)$ is unbounded below.

\emph{Dual problem.} From (\ref{eq:first_subproblem}) and (\ref{eq:second_subproblem}), the dual problem to (\ref{eq:robust-lasso}) can be derived as:
\begin{equation*}
\begin{array}{lll}
\dsp \phi_{\lambda, \epsilon} = & \dsp \max_{\substack{u \in \reals^k, \\ v, r \in \reals^n}} & u^T K^{-1/2} c + s \sqrt{1 - u^T K^{-1} u} \\
& \mbox{s.t.} & \| v \|_\infty \leq \lambda,\; \| r \|_2 \leq \epsilon, \; Pu = v + r
\end{array}
\end{equation*}
Letting $R := P K^{1/2} \in \reals^{n \times k}$ and replacing $u$ by $K^{-1/2} u$, we have
\begin{equation}
\label{eq:robust_lasso_dual}
\begin{array}{llll}
\dsp \phi_{\lambda, \epsilon} & = & \dsp \max_{\substack{u \in \reals^k, \\ v, r \in \reals^n}} & u^T c + s \sqrt{1 - u^T u} \\
& & \mbox{s.t.} & \| v \|_\infty \leq \lambda, \; \| r \|_2 \leq \epsilon, \; Ru = v + r \\
& = & \dsp \max_{u, v, r, t} & u^T c + s t \\
& & \mbox{s.t.} & \left \| \begin{bmatrix}
u\\
t
\end{bmatrix} \right \|_2 \leq 1, \; \| v \|_\infty \leq \lambda, \; \| r \|_2 \leq \epsilon, \\
& & & Ru = v + r \\
\end{array}
\end{equation}
\emph{Bidual problem.} The bidual of (\ref{eq:robust-lasso}) writes
\begin{equation*}
\begin{array}{lll}
\dsp \phi_{\lambda, \epsilon} & = & \dsp \min_{w \in \reals^n} \max_{u, v, r, t} u^T c + s t + w^T (v + r - R u) \\
& & \mbox{s.t.} \quad \left \| \begin{bmatrix}
u\\
t
\end{bmatrix} \right \|_2 \leq 1, \| v \|_\infty \leq \lambda, \| r \|_2 \leq \epsilon \\
& = & \dsp \min_{w \in \reals^n} \max_{u, v, r, t} \begin{bmatrix}
c - R^T w\\
s
\end{bmatrix} ^T \begin{bmatrix}
u\\
t
\end{bmatrix} + w^T v + w^T r \\
& & \mbox{s.t.} \quad \left \| \begin{bmatrix}
u\\
t
\end{bmatrix} \right \|_2 \leq 1, \| v \|_\infty \leq \lambda, \; \| r \|_2 \leq \epsilon
\end{array}
\end{equation*}

Therefore,
\begin{equation} \label{eq:bidual_robust_lasso}
\begin{array}{ll}
\dsp \phi_{\lambda, \epsilon} & = \dsp \min_{w \in \reals^n} \left \| \begin{bmatrix}
c - R^T w\\
s
\end{bmatrix} \right \|_2 + \epsilon \| w \|_2 + \lambda \| w \|_1
\end{array}
\end{equation}
where $R \in \reals^{n \times k}$, $c \in \reals^{k}$ and $s \in \reals$. Note that problem (\ref{eq:bidual_robust_lasso}) still involves $n$ variables in $w$, but now the size of the design matrix is only $n$-by-$k$, instead of $n$-by-$m$ as the original problem.

\emph{Summary.} To summarize, to solve problem~(\ref{eq:robust-lasso}) with $X = PQ^T$ we first set $c := (Q^T Q)^{-1/2} Q^T y$, $s := \sqrt{y^T y - c^T c}$, $R := P(Q^TQ)^{1/2}$, then solve the problem~(\ref{eq:bidual_robust_lasso}) above. As discussed later, the worst-case complexity grows as $O(kn^2+k^3)$. This is in contrast with the original problem~(\ref{eq:robust-lasso}) when no structure is exploited, in which case the complexity grows as $O(mn^2+m^3)$.
% paragraph solving_the_problem_fast (end)

\section{Safe Feature Elimination}
\label{sec:safe-feature-elimination}
In this section we present a method to reduce the dimension of (\ref{eq:bidual_robust_lasso}) without changing the optimal value. Let us define $A := \begin{bmatrix}
R^T\\ 0 \end{bmatrix} \in \reals^{(k + 1) \times n}$ and $b := \begin{bmatrix} c\\ s \end{bmatrix} \in \reals^{k + 1}$, problem (\ref{eq:bidual_robust_lasso}) becomes:
\begin{equation} \label{eq:safe_primal}
\begin{array}{ll}
\dsp \phi_{\lambda, \epsilon} = \dsp \min_{w \in \reals^n} \| Aw - b \|_2 + \epsilon \| w \|_2 + \lambda \| w \|_1
\end{array}
\end{equation}
Problem (\ref{eq:safe_primal}) is equivalent to:
\begin{equation*}
\begin{array}{llll}
\dsp \phi_{\lambda, \epsilon} & = & \dsp \min_{w \in \reals^n} \max_{\alpha, \beta, \gamma} \alpha^T (Aw - b)  + \beta^T w + \gamma^T w \\
& & \mbox{s.t.} \quad \| \alpha \|_2 \leq 1, \, \| \beta \|_2 \leq \epsilon, \, \| \gamma \|_\infty \leq \lambda \\
& = & \dsp \max_{ \substack{\| \alpha \|_2 \leq 1 \\ \| \beta \|_2 \leq \epsilon \\ \| \gamma \|_\infty \leq \lambda}} \min_{w \in \reals^n} w^T (A^T \alpha + \beta + \gamma) - \alpha^T b \\
& = & \dsp \max_{\alpha, \beta, \gamma} - \alpha^T b \\
& & \mbox{s.t.} \quad \| \alpha \|_2 \leq 1, \, \| \beta \|_2 \leq \epsilon, \, \| \gamma \|_\infty \leq \lambda, \\
& & \qquad \; A^T \alpha + \beta + \gamma = 0 \\
& = & \dsp \max_{\alpha, \beta, \gamma} - \alpha^T b \\
& & \mbox{s.t.} \quad \| \alpha \|_2 \leq 1, \, \| \beta \|_2 \leq \epsilon, \\
& & \qquad \; | a_i^T \alpha + \beta_i | \leq \lambda, \forall i = 1 \ldots n
\end{array}
\end{equation*}
where $a_i$'s are columns of $A$. If $\|a_i\|_2 \leq \lambda - \epsilon$,  we always have $|a_i^T \alpha + \beta_i | \leq | a_i^T \alpha | + | \beta_i | \leq \lambda$. In other words, we can then safely discard the $i$-th feature without affecting our optimal value.

\section{Non-robust square-root LASSO}
\label{sec:non-robust-square-root-lasso}
In practice, a simple idea is to replace the data matrix by its low rank approximation in the model. We refer to this approach as the non-robust square-root LASSO:
\begin{equation}
\label{eqn:non-robust-square-root-lasso}
\begin{array}{ll}
\dsp \min_{w \in \reals^n} \| Q P^T w - b \|_2 + \lambda \| w \|_1
\end{array}
\end{equation}

\begin{figure}[t]
\centering
\subfigure[Non-robust rank-1 square-root LASSO.]{
\includegraphics[width=0.36\textwidth]{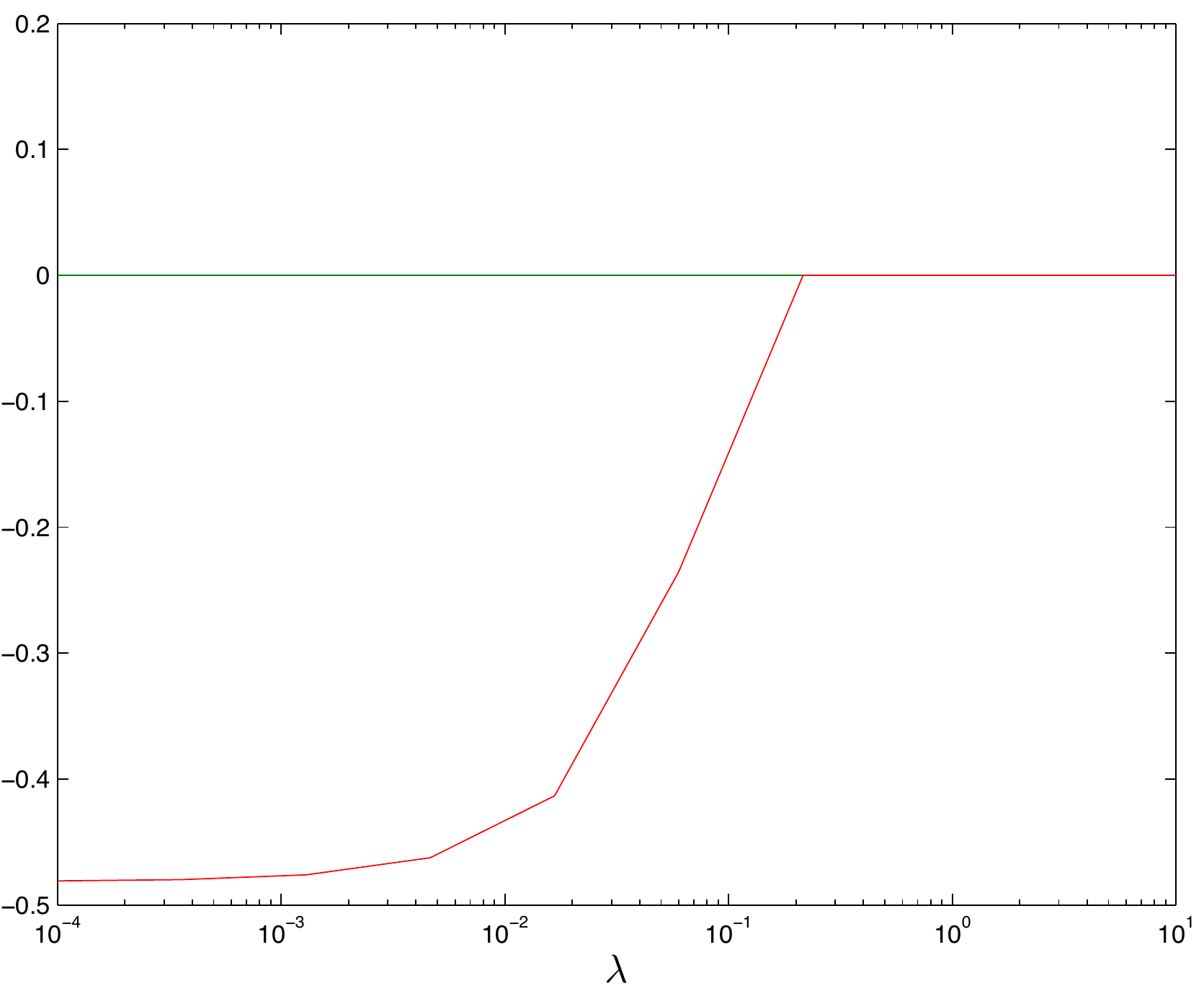}
\label{fig:graph-non-robust-sqrt-lasso}
}
\subfigure[Robust rank-1 square-root LASSO.]{
\includegraphics[width=0.36\textwidth]{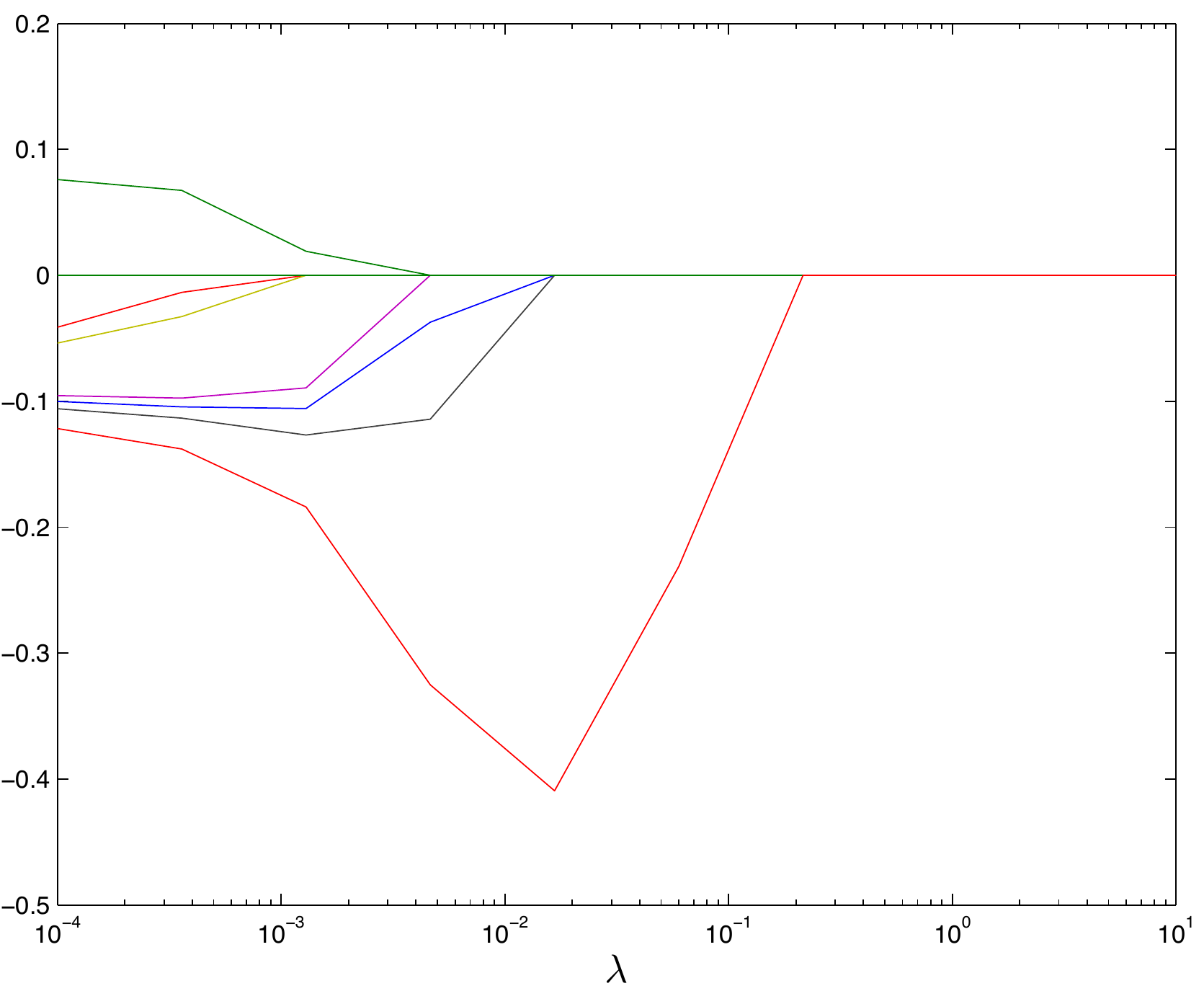}
\label{fig:graph-robust-sqrt-lasso}
}
\caption{Non-robust versus Robust square-root LASSO under rank-1 approximated data.}
\label{fig:non-robust-versus-robust}
\end{figure}

For many learning applications, this approach first appears as an attractive heuristic to speed up the computation. Nevertheless, in problems with sparsity as the main emphasis, care must be taken in the presence of the regularization involving $l_1$-norm. Figure~\ref{fig:graph-non-robust-sqrt-lasso} shows an example of a non-robust square-root LASSO with data replaced by its rank-$1$ approximation. The optimal solution then always has a cardinality at most $1$, and the tuning parameter $\lambda$ does not provide any sparsity control, unlike the robust low-rank model in Figure~\ref{fig:graph-robust-sqrt-lasso}. In general, replacing our data with its low-rank approximation will result in the lost of the sparsity control from regularization parameters. We provide an insight for this absence of sparsity control in the following theorem.

\begin{theorem}
\label{thm:non-robust-square-root-lasso}
For the non-robust square-root LASSO problem (\ref{eqn:non-robust-square-root-lasso}), with $P \in \reals^{n \times k}$ and $Q \in \reals^{m \times k}$ full rank where $k \ll \min \{ m, n \}$, there exists a LASSO solution with cardinality at most $k$.
\end{theorem}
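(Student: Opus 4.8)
The plan is to exploit that the approximated design matrix $QP^T$ has rank at most $k$, so the loss term in (\ref{eqn:non-robust-square-root-lasso}) only sees $w$ through the $k$-dimensional image $P^T w$, whereas the $\ell_1$ penalty sees all of $w$. Concretely, I would first fix any minimizer $w^\star$ of (\ref{eqn:non-robust-square-root-lasso}) and freeze the vector $z^\star := P^T w^\star \in \reals^k$. Every $w$ with $P^T w = z^\star$ produces exactly the same loss $\| Q z^\star - b \|_2$, so among all such $w$ it is enough to retain one of minimal $\ell_1$ norm: that vector has the same loss as $w^\star$ and $\ell_1$ penalty no larger than $\| w^\star \|_1$, hence is again globally optimal for (\ref{eqn:non-robust-square-root-lasso}).

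This reduces the claim to showing that the basis-pursuit problem $\min_w \| w \|_1$ subject to $P^T w = z^\star$ admits an optimal solution with at most $k$ nonzero entries. The problem is feasible (since $w^\star$ is feasible) and bounded below by $0$. I would cast it as a linear program by splitting $w = w_+ - w_-$ with $w_+, w_- \geq 0$ and objective $\mathbf{1}^T (w_+ + w_-)$, subject to $P^T (w_+ - w_-) = z^\star$. This LP has a nonempty feasible set and finite value, so its optimum is attained at a basic feasible solution (a vertex). Since the equality system has only $k$ rows and $P^T \in \reals^{k \times n}$ has full row rank $k$, any basic feasible solution has at most $k$ nonzero coordinates among the $2n$ variables $(w_+, w_-)$; consequently the recovered $w = w_+ - w_-$ has at most $k$ nonzeros. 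Pulling this back through the reduction yields an optimal $w$ for (\ref{eqn:non-robust-square-root-lasso}) with cardinality at most $k$.

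The main obstacle is this sparsity step: one must make the vertex/support-size argument fully rigorous, including attainment of the minimum and the passage from the $2n$ LP variables back to $w$. An equivalent and perhaps more self-contained route avoids the LP formulation by a direct support-reduction argument: if a candidate optimum $w$ had more than $k$ nonzeros, the columns of $P^T$ indexed by the support of $w$ would be linearly dependent, so there is a nonzero direction $d$ supported on that index set with $P^T d = 0$. Moving $w \mapsto w + t d$ leaves the loss unchanged, and since $t \mapsto \| w + t d \|_1$ is convex and piecewise linear, one can travel in a non-increasing direction until some coordinate first vanishes, strictly shrinking the support without increasing the objective. Iterating drives the cardinality down to at most $k$. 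The only delicate points here are verifying that a zero-crossing is indeed reached in the chosen direction (which follows from $\| w + t d \|_1 \to \infty$ as $|t| \to \infty$) and that $P^T$ restricted to any index set of size exceeding $k$ is column-rank-deficient (immediate from $P^T \in \reals^{k \times n}$).
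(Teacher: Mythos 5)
Your proposal is correct, and its first half coincides with the paper's own reduction: the paper likewise fixes an optimal $w_0$ and reduces the theorem to showing that $\min_w \|w\|_1$ subject to $P^T w = P^T w_0$ admits a solution of cardinality at most $k$ (the paper reaches this via the orthogonal decomposition $b = Qz + u$ with $u \perp \range(Q)$, which your simpler observation---that the loss depends on $w$ only through $P^T w$---makes unnecessary). Where you genuinely differ is in proving that basis-pursuit lemma, i.e.\ problem (\ref{eqn:basis-pursuit}). The paper's proof invokes the subgradient optimality condition to produce a dual certificate $\mu$ with $A^T \mu = \mathrm{sign}(x)$, and uses it to build a null-space direction along which the $\ell_1$ norm is \emph{exactly} constant, sliding until a coordinate vanishes (the crossing is guaranteed by construction, since the direction forces coordinate $i$ itself to vanish at step $|x_i|$). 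Your primary route---split $w = w_+ - w_-$, invoke the fundamental theorem of linear programming, and note that a basic feasible solution of a rank-$k$ equality system has at most $k$ nonzeros---is precisely what the paper relegates to a one-line remark after its proof (``an alternative proof is to formulate the problem as a linear program and observe that the optimal solution is at a vertex''). Your secondary route is a certificate-free variant of the paper's support-reduction: take any null-space direction supported on $\mathrm{supp}(w)$, choose its sign so that the locally linear map $t \mapsto \|w + t d\|_1$ is non-increasing, and use coercivity of the norm to guarantee a zero crossing, strictly shrinking the support without increasing the objective. Both of your arguments are sound as sketched and are arguably more elementary than the paper's (no subgradient/KKT machinery needed); the trade-off is that they yield only a non-increasing $\ell_1$ norm rather than the exactly preserved norm the paper's certificate gives---which is all the theorem requires, since the iteration starts at an optimum.
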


\begin{proof}
Uniquely decomposing $b$ into $b = Qz + u$ where $u \perp \range(Q)$ gives
\begin{equation*}
\begin{array}{ll}
& \dsp \min_{w \in \reals^n} \| Q(P^Tw - z) - u \|_2 + \lambda \| w \|_1 \\
= & \dsp \min_{w \in \reals^n} \sqrt{\| Q(P^Tw - z)\|^2_2 + \| u \|^2_2} + \lambda \| w \|_1
\end{array}
\end{equation*}
Let $w_0$ be any optimal solution to this problem, it suffices to show that the problem
\begin{equation*}
\begin{array}{ll}
& \dsp \min_{w \in \reals^n} \| w \|_1 \; : \; P^T w = P^T w_0
\end{array}
\end{equation*}
has an optimal solution with cardinality at most $k$. We prove this in the following lemma:
\end{proof}

\begin{lemma}
The problem
\begin{equation}
\label{eqn:basis-pursuit}
\begin{array}{ll}
& \dsp \min_{x \in \reals^n} \| x \|_1 \; : \; Ax = b
\end{array}
\end{equation}
with $A \in \reals^{k \times n}$ wide ($k < n$) and $b \in \range(A)$ has an optimal solution with cardinality at most $k$.
\end{lemma}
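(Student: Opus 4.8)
The plan is to prove the lemma by a support-reduction argument: starting from any optimal solution, I repeatedly move along a direction in the null space of $A$ that leaves both feasibility and the objective value intact, until a coordinate is driven to zero; this strictly shrinks the support while preserving optimality. First I would note that the problem is feasible (since $b \in \range(A)$) and that $\|x\|_1$ is coercive, so the feasible sublevel sets are compact and an optimal solution $x^*$ exists. Let $S := \{ i : x^*_i \neq 0 \}$ be its support; if $|S| \le k$ we are done.

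Suppose instead $|S| > k$. Since the columns $\{a_i\}_{i \in S}$ of $A$ live in $\reals^k$ and there are more than $k$ of them, they are linearly dependent, so there is a nonzero $h \in \reals^n$ with support contained in $S$ and $Ah = 0$. For $t$ small, $x^* + t h$ is feasible ($A(x^*+th)=b$) and has the same signs as $x^*$ on $S$, so $\|x^* + t h\|_1 = \|x^*\|_1 + t\, g^T h$, where $g_i := \operatorname{sign}(x^*_i)$ for $i \in S$. Optimality of $x^*$ forces $g^T h = 0$: otherwise one of the two directions $\pm h$ would strictly decrease the objective, contradicting optimality.

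Because $g^T h = 0$, the objective stays exactly at $\|x^*\|_1$ along the whole segment $x^* + t h$ for as long as no coordinate changes sign. I would then fix the sign of $h$ and increase $|t|$ up to the first value $t^*$ at which some coordinate $x^*_i + t h_i$ reaches zero; such a $t^*$ is finite because $h$ is nonzero on $S$. The point $x^* + t^* h$ is still feasible, still optimal (objective unchanged), and has strictly smaller support. Replacing $x^*$ by this new optimizer and repeating, the support strictly decreases at each step, so after finitely many iterations we reach an optimal solution with cardinality at most $k$.

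The main obstacle is the bookkeeping in the null-direction move: one must verify that the first-order expansion $\|x^*+th\|_1 = \|x^*\|_1 + t\, g^T h$ is valid precisely on the interval before any sign flip, and that stopping exactly at the first coordinate to vanish both keeps us inside that interval and guarantees a net drop in support. These two coincide because a coordinate vanishing is exactly the event that would change a sign, so halting there is legitimate. Everything else---existence of the optimizer, the linear-dependence extraction of $h$, and termination after finitely many deletions---is routine.
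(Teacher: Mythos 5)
Your proof is correct and follows the paper's overall strategy---iteratively shrinking the support of an optimal solution by moving along a null-space direction, supported on the current support, that leaves the $\ell_1$ norm unchanged until some coordinate vanishes---but the key norm-preservation step is justified by a genuinely different, more elementary mechanism. The paper (following Tibshirani) invokes the optimality condition to produce a dual certificate $\mu$ with $A^T\mu = s$ (where $s$ is the sign vector), and combines it with a linear-dependence relation among the columns to build a specific direction $\theta$ whose coefficients satisfy $\sum_j d_j = 1$; with that construction, the $\ell_1$ norm is preserved by design and the designated coordinate $i$ is guaranteed to vanish at the finite step $\rho = |x_i|$. You instead take an arbitrary nonzero $h$ with $Ah = 0$ and support in $S$, and deduce $g^T h = 0$ directly from first-order optimality (otherwise one of $\pm h$ would strictly decrease the objective); this dispenses with the dual certificate entirely, and you also justify existence of an optimizer via coercivity, which the paper leaves implicit. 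What the paper's construction buys is that finiteness of the step length is automatic; in your version, the claim that $t^*$ is finite ``because $h$ is nonzero on $S$'' is, for a fixed sign of $h$, not quite sufficient on its own. The clean way to finish is to note that $g^T h = 0$ together with $h \not\equiv 0$ on $S$ forces the terms $\mathrm{sign}(x_i^*)\,h_i$ to take both positive and negative values, so along either direction $+h$ or $-h$ (indeed both) some coordinate $x_i^* + t h_i$ reaches zero at a finite $t > 0$. This is a one-sentence repair using a fact you already established, not a gap in the approach.
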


\begin{proof}
Our proof is adapted from \cite{tibshirani2013lasso} on the existence and uniqueness of the solution. Let $x \in \reals^n$ be an optimal solution to (\ref{eqn:basis-pursuit}). Without loss of generality, we can assume all components of $x_i$ are non-zeros (if some components are zeros one can discard the corresponding columns of $A$).

If $\card(x) > k$, we provide a procedure to reduce the cardinality of $x$, while keeping the same $l_1$-norm and constraint feasibility. Let $s \in \reals^n$ be the (unique) subgradient of $\| x \|_1$: $s_i := \mbox{sign}(x_i), i = 1, \ldots, n$. The optimality condition of (\ref{eqn:basis-pursuit}) shows that $\exists \mu \in \reals^k$ such that $A^T \mu = s$. Since all the columns $A_i$'s are linearly dependent, there exist $i$ and $c_j$ such that
\begin{equation*}
\begin{array}{lll}
\dsp A_i & = & \sum_{j \in \Epsilon \backslash \{ i \}} c_j A_j, \text{ where } \Epsilon := \{1, \ldots, n \} \\
\dsp A_i^T \mu & = & \sum_{j \in \Epsilon \backslash \{ i \}} c_j A_j^T \mu \\
\dsp s_i \mu & = & \sum_{j \in \Epsilon \backslash \{ i \}} c_j s_j \\
\end{array}
\end{equation*}
Therefore $1 = s_i^2 = \sum_{j \in \Epsilon \backslash \{ i \}} c_j s_j s_i$. Defining $d_j := c_j s_j s_i$ gives $\sum_{j \in \Epsilon \backslash \{ i \} } d_j = 1$ and
\begin{equation*}
\begin{array}{lll}
\dsp s_i A_i & = & \sum_{j \in \Epsilon \backslash \{ i \}} c_j s_i A_j\\
\dsp & = & \sum_{j \in \Epsilon \backslash \{ i \}} c_j s_j s_i s_j A_j \text{ : since } s_j^2 = 1\\
\dsp & = & \sum_{j \in \Epsilon \backslash \{ i \}} d_j s_j A_j\\
\end{array}
\end{equation*}
Let us define a direction vector $\theta \in \reals^n$ as follows: $\theta_i := -s_i$ and $\theta_j := d_j s_j, j \in \Epsilon \backslash \{ i \}$. Then $A \theta = (-s_i A_i) + \sum_{j \in \Epsilon \backslash \{ i \}} d_j s_j A_j = 0$. Thus letting
\begin{equation*}
\begin{array}{ll}
x^{(\rho)} := x + \rho \theta \text { with } \rho > 0
\end{array}
\end{equation*}
we have $x^{(\rho)}$ feasible and its l-$1$ norm stays unchanged:
\begin{equation*}
\begin{array}{lll}
\| x^{(\rho)} \|_1 & = & |x_i^{(\rho)} | + \sum_{j \in \Epsilon \backslash \{ i \}} |x_j^{(\rho)} | \\
& = & (|x_i| - \rho) + \sum_{j \in \Epsilon \backslash \{ i \}} (x_j^{(\rho)} + \rho d_j ) \\
& = & \| x \|_1 + \rho \left ( \sum_{j \in \Epsilon \backslash \{ i \}} \rho d_j - 1 \right )\\
& = & \| x \|_1
\end{array}
\end{equation*}
Choosing $\rho := \min \{ t \geq 0 : x_j + t \theta_j = 0 \mbox{ for some } $j$ \}$
we have one fewer non-zeros components in $x^{(\rho)}$. Note that $\rho \leq |x_i|$. Therefore, repeating this process gives an optimal solution $x$ of at most $k$ non-zeros components.
\QED
\end{proof}

\textbf{Remark}. An alternative proof is to formulate problem (\ref{eqn:basis-pursuit}) as a linear program, and observe that the optimal solution is at a vertex of the constraint set. Our result is also consistent with the simple case when the design matrix has more features than observations, there exists an optimal solution of the LASSO problem with cardinality at most the number of observations, as shown by many authors (\cite{tibshirani2013lasso}).

\section{Theoretical Analysis}
\label{sec:complexity-analysis}
Our objective in this section is to analyze the theoretical complexity of solving problem (\ref{eq:safe_primal}):
\begin{equation*}
\begin{array}{ll}
\dsp \min_{x \in \reals^n} \| Ax - b \|_2 + \epsilon \| x \|_2 + \lambda \| x \|_1
\end{array}
\end{equation*}
where $A \in \reals^{k \times n}$, $b \in \reals^{k}$, and the optimization variable is now $x \in \reals^n$. We present an analysis for a standard second-order methods via log-barrier functions. We also remark that with a generic primal-dual interior point method, our robust model can effectively solve problems of $3 \times 10^5$ observations and $10^5$ features in just a few seconds. In practical applications, specialized interior-point methods for specific models with structures can give very high performance for large-scale problems, such as in \cite{kim2007interior} or in \cite{koh2007interior}. Our paper, nevertheless, does not focus on developing a specific interior-point method for solving square-root LASSO; instead we focus on a generalized model and the analysis of multiple instances with a standard method.
% For larger-scale problems, one may need to resort to first-order algorithms. Nevertheless there is a price: theoretically for a fixed problem size, the first-order methods converge in $O(1/\epsilon)$, where $\epsilon$ is the required accuracy on the optimal value, while interior-point method converges in $O(\log(1/\epsilon))$.

\subsection{Square-root LASSO}
In second-order methods, the main cost at each iteration is from solving a linear system of equations involving the Hessian of the barrier function (\cite{andersen2003implementing}). Consider the original square-root LASSO problem:
\begin{equation*}
\begin{array}{ll}
\dsp \min_{x \in \reals^n} \| Ax - b \|_2 = \dsp \min_{x \in \reals^n, \, s \in \reals} s \; : \; \| Ax - b \|_2 \leq s
\end{array}
\end{equation*}
The log-barrier function is $ \dsp \varphi_\gamma(x, s) = \gamma s - \log \left( s^2 - \| Ax - b \|^2_2 \right) $. The cost is from evaluating the inverse Hessian of
\begin{equation} \label{eq:log-barrier-function}
\begin{array}{ll}
f := - \log \left( s^2 - (Ax - b)^T (Ax - b) \right)
\end{array}
\end{equation}
Let $g := - \log \left( s^2 - w^T w \right)$, we have $\nabla g = \frac{2}{-g} \begin{bmatrix} -w\\s \end{bmatrix}$ and $\nabla^2 g = \frac{2}{-g} \begin{bmatrix} -I & 0 \\0 & 1 \end{bmatrix} + \nabla g (\nabla g)^T$. The Hessian $\nabla^2 g$ is therefore a diagonal plus a dyad.

For (\ref{eq:log-barrier-function}), rearranging the variables as $\tilde{x} = \begin{bmatrix} x\\s \end{bmatrix}$ gives $\begin{bmatrix} Ax - b\\s \end{bmatrix} = \begin{bmatrix} A & 0\\0 & 1 \end{bmatrix} \begin{bmatrix} x\\s \end{bmatrix} - \begin{bmatrix} b\\0 \end{bmatrix} = \tilde{A} \tilde{x} - \tilde{b}$ where $\tilde{A} \in \reals^{(k+1) \times n}$ and:
\begin{equation} \label{eq:log-barrier-sqrt-lasso}
\begin{array}{lll}
% \nabla f
% = \tilde{A}^T \frac{2}{-f} \begin{bmatrix} -(Ax - b)\\s \end{bmatrix} \\
\nabla^2 f \\
= \tilde{A}^T \left( \frac{2}{-f} \begin{bmatrix} -I & 0 \\0 & 1 \end{bmatrix} + \frac{4}{f^2} \begin{bmatrix} -(Ax - b)\\s \end{bmatrix} \begin{bmatrix} -(Ax - b)\\s \end{bmatrix}^T \right) \tilde{A} \\
= \frac{2}{-f} \tilde{A}^T \begin{bmatrix} -I & 0 \\0 & 1 \end{bmatrix} \tilde{A} \\
+ \frac{4}{f^2} \left( \tilde{A}^T \begin{bmatrix} -(Ax - b)\\s \end{bmatrix} \right) \left( \tilde{A}^T \begin{bmatrix} -(Ax - b)\\s \end{bmatrix} \right)^T
\end{array}
\end{equation}
The Hessian $\nabla^2 f$ is therefore simply a $(k + 2)$-dyad.

\subsection{Regularized square-root LASSO}
For (\ref{eq:safe_primal}), decomposing $x = p - q$ with $p \geq 0, q \geq 0$ gives
\begin{equation*}
\begin{array}{lll}
\phi & = & \dsp \min_{w \in \reals^n} \| Ax - b \|_2 + \epsilon \| x \|_2 + \lambda \| x \|_1 \\
& = & \dsp \min_{\substack{p, q \in \reals^n, \\ s, t \in \reals}} s + \epsilon t + \lambda \left( \ones^T p + \ones^T q \right) \\
& & \mbox{s.t.} \quad \|p - q\|_2 \leq t, \; \|A(p - q) - b\|_2 \leq s, \\& & \qquad \; p \geq 0, q \geq 0
\end{array}
\end{equation*}
The log-barrier function is thus
\begin{equation*}
\begin{array}{ll}
\varphi_\gamma (p, q, s, t) = & \gamma \left( s + \epsilon t + \lambda \left( \ones^T p + \ones^T q \right) \right) \\
& - \log \left(t^2 - \|p - q\|_2^2 \right) \\
& - \log \left( s^2 - \| A(p - q) - b\|_2^2 \right) \\
& - \dsp \sum_{i = 1}^n \log(p_i) - \dsp \sum_{i = 1}^n \log(q_i) \\
& - \log(s) - \log(t).
\end{array}
\end{equation*}

\emph{First log term}. Let $l_1 := -\log \left(t^2 - \|p - q\|_2^2 \right)$. Rearranging our variables as $\tilde{x} = \begin{bmatrix} p_1, q_1, \cdots p_n, q_n, t\end{bmatrix}^T$, we have
\begin{equation*}
\begin{array}{lll}
\nabla l_1  & = & \frac{2}{-l_1} \begin{bmatrix} p_1 - q_1, q_1 - p_1, \cdots p_n - q_n, q_n - p_n, t\end{bmatrix}^T \\
\nabla^2 l_1 & = & \frac{2}{-l_1}
\begin{bmatrix}
B &  &  & \\
& \ddots  &  & \\
&  & B & \\
&  &  & 1
\end{bmatrix} + \nabla l_1 (\nabla l_1)^T
\end{array}
\end{equation*}
where there are $n$ blocks of $B := \begin{bmatrix} -1 & 1\\ 1 & -1 \end{bmatrix}$ in the Hessian $\nabla^2 l_1$.

\emph{Second log term}. Let $l_2 := -\log \left( s^2 - \| A(p - q) - b\|_2^2 \right)$. Keeping the same arrangement of variables $\tilde{x} = \begin{bmatrix} p_1, q_1, \cdots p_n, q_n, s\end{bmatrix}^T$, we have
\begin{equation*}
\begin{array}{ll}
\begin{bmatrix}
A(p - q) - b\\
s
\end{bmatrix} = \tilde{A} \tilde{x}
\end{array}
\end{equation*}
where $\tilde{A} \in \reals^{(k+1) \times (2n + 1)}$. Following (\ref{eq:log-barrier-sqrt-lasso}), we have the Hessian is a $(k + 2)$-dyad.

\emph{Third log term}. Let $l_3 := -\dsp \sum_{i = 1}^n \log(p_i) - \dsp \sum_{i = 1}^n \log(q_i) -\log(s) - \log(t)$. Every variable is decoupled; therefore the Hessian is simply diagonal.

\textbf{Summary}. The Hessian of the log barrier function $\varphi_\gamma (p, q, s, t)$ is a block diagonal plus a $(k + 2)$-dyad. At each iteration of second-order method, inverting the Hessian following the matrix inversion lemma costs $O(k n^2)$. For the original square-root LASSO problem (\ref{eq:original-lasso}), using similar methods will cost $O(m n^2)$ at each iteration (\cite{andersen2003implementing}).

\section{Numerical Results}
\label{sec:numerical-results}
In this section, we perform experiments on both synthetic data and real-life data sets on different learning tasks. The data sets \footnote{All data sets are available at \url{http://www.csie.ntu.edu.tw/~cjlin/libsvmtools/datasets/}.} are of varying sizes, ranging from small, medium and large scales (Table~\ref{tab:data_statistics}). To compare our robust model and the full model, we run all experiments on the same workstation at 2.3~GHz Intel core i7 and 8GB memory. Both models have an implementation of the generic second-order algorithm from Mosek solver (\cite{andersen2000mosek}). For low-rank approximation, we use the simple power iteration methods. To make the comparison impartial, we do not use the safe feature elimination technique presented in Section \ref{sec:safe-feature-elimination} in our robust model.

\begin{table}
\caption{Data sets used in numerical experiments.}
\label{tab:data_statistics}
\begin{center}
{\scriptsize
\begin{tabular}{|c|rrrr|}
\hline
Data set         & \#train   & \#test    & \#features & Type \\
\hline
Gisette          & 6,000   & 1,000   & 5,000      & dense     \\
20 Newsgroups & 15,935 &  3,993 &  62,061 & sparse \\
RCV1.binary       & 20,242  & 677,399 & 47,236     & sparse    \\
SIAM 2007 & 21,519 &  7,077  & 30,438 & sparse \\
Real-sim & 72,309 & N/A & 20,958 & sparse \\
NIPS papers      & 1,500   & N/A     & 12,419     & sparse    \\
NYTimes & 300,000 & N/A     & 102,660    & sparse    \\
\hline
Random 1 & 500 & N/A & 100 & dense \\
Random 2 & 625 & N/A & 125 & dense \\
Random 3 & 750 & N/A & 150 & dense \\
\ldots & \ldots & \ldots & \ldots & \ldots \\
Random 19 & 2750 & N/A & 550 & dense \\
Random 20 & 2875 & N/A & 575 & dense \\
Random 21 & 3000 & N/A & 600 & dense \\
\hline
\end{tabular}
}
\end{center}
\end{table}

% \begin{table}
% \caption{Statistics of data sets used in numerical experiments.}
% \label{tab:data_statistics}
% \begin{center}
% {\scriptsize
%   \begin{tabular}{c|rrrr}
%     Data set         & \#train   & \#test    & \#features & sparsity \\
%     \hline
%     Gisette          & 6,000   & 1,000   & 5,000      & 0.90\%     \\
%     RCV1             & 20,242  & 677,399 & 47,236     & 99.84\%    \\
%     NIPS papers      & 1,500   & N/A     & 12,419     & 95.99\%    \\
%     NYTimes articles & 300,000 & N/A     & 102,660    & 99.77\%
%   \end{tabular}
% }
% \end{center}
% \end{table}

\subsection{Complexity on synthetic data}
Our objective in this experiment is to compare the actual computational complexity with the theoretical analysis presented in Section~\ref{sec:complexity-analysis}. We generated dense and i.i.d. random data for $n = 100 \ldots 600$. At each $n$, a data set of size $5n$-by-$n$ is constructed. We keep $k$ fixed across all problem sizes, run the two models and compute the ratio between the running time of our model to that of the full model. The running time of our model is the \emph{total} computational time of the data sketching phase and the training phase. The experiment is repeated 100 times at each problem size. As Figure~\ref{fig:performance} shows, the time ratio grows asymptotically as $O(1/n)$, a reduction of an order of magnitude in consistent with the theorical result.

\begin{figure}[h]
\begin{center}
\includegraphics[width=0.40\textwidth]{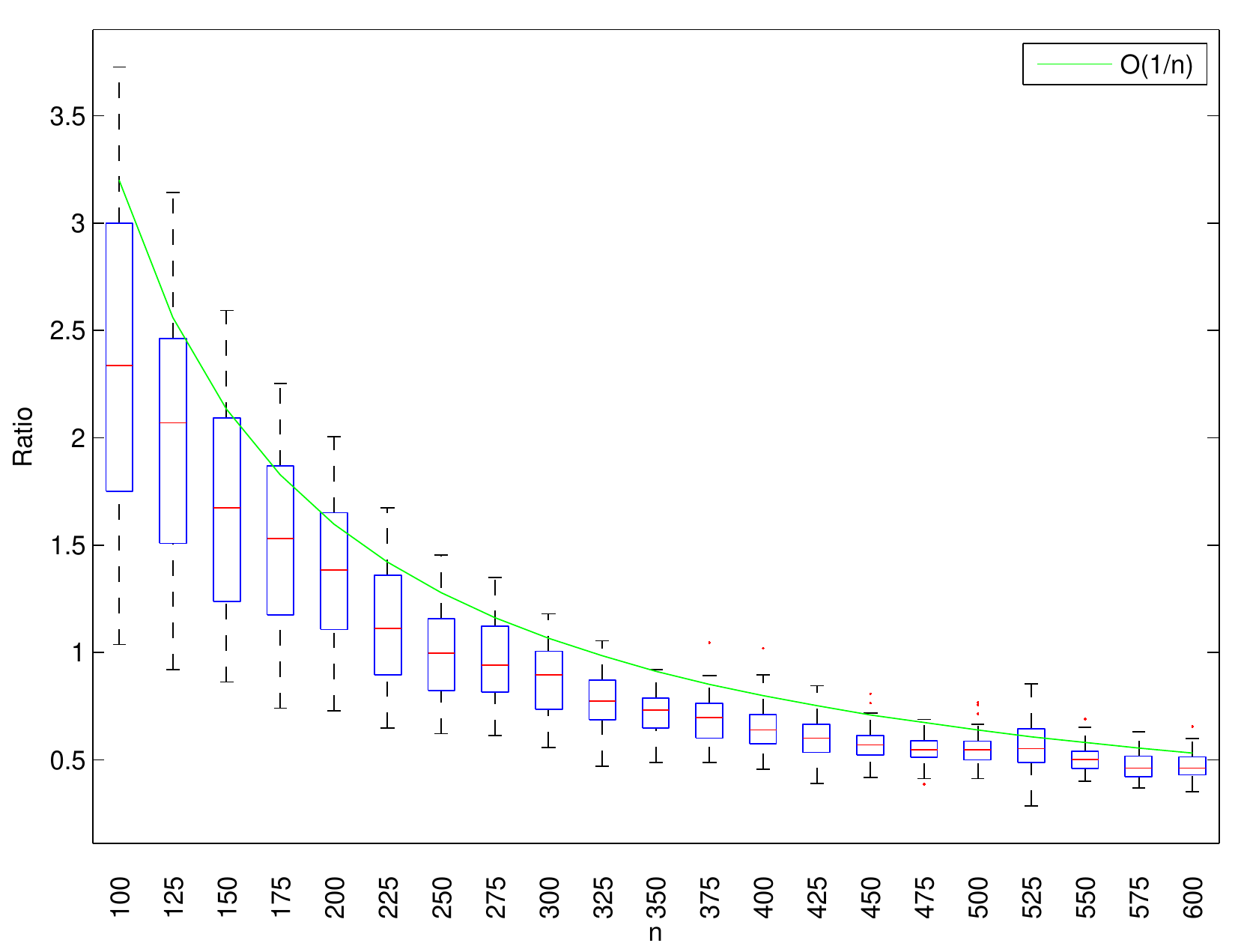}
\end{center}
\caption{The ratio between the running time of our robust model and the original model.}
\label{fig:performance}
\end{figure}

\subsection{Cross validation and leave-one-out}
In this experiment, we focus on the classical 5-fold cross validation on real-life data sets. Figure~\ref{fig:gisette_5_fold_cross_validation_time_against_k} shows the running time (in CPU seconds) from $k = 1 \ldots 50$ for 5-fold cross validation on Gisette data, the handwritten digit recognition data from NIPS 2003 challenge (\cite{guyon2004result}). It takes our framework less than 40 seconds, while it takes 22,082 seconds (500 times longer) for the full model to perform 5-fold cross validation. Furthermore, with leave-one-out analysis, the running time for the full model would require much more computations, becoming impractical while our model only needs a total of 7,684 seconds, even less than the time to carry out 5-fold cross validation on the original model. Table~\ref{tab:cross_validation} reports the experimental results on other real-life data sets.

\begin{figure}[h]
\begin{center}
\includegraphics[width=0.40\textwidth]{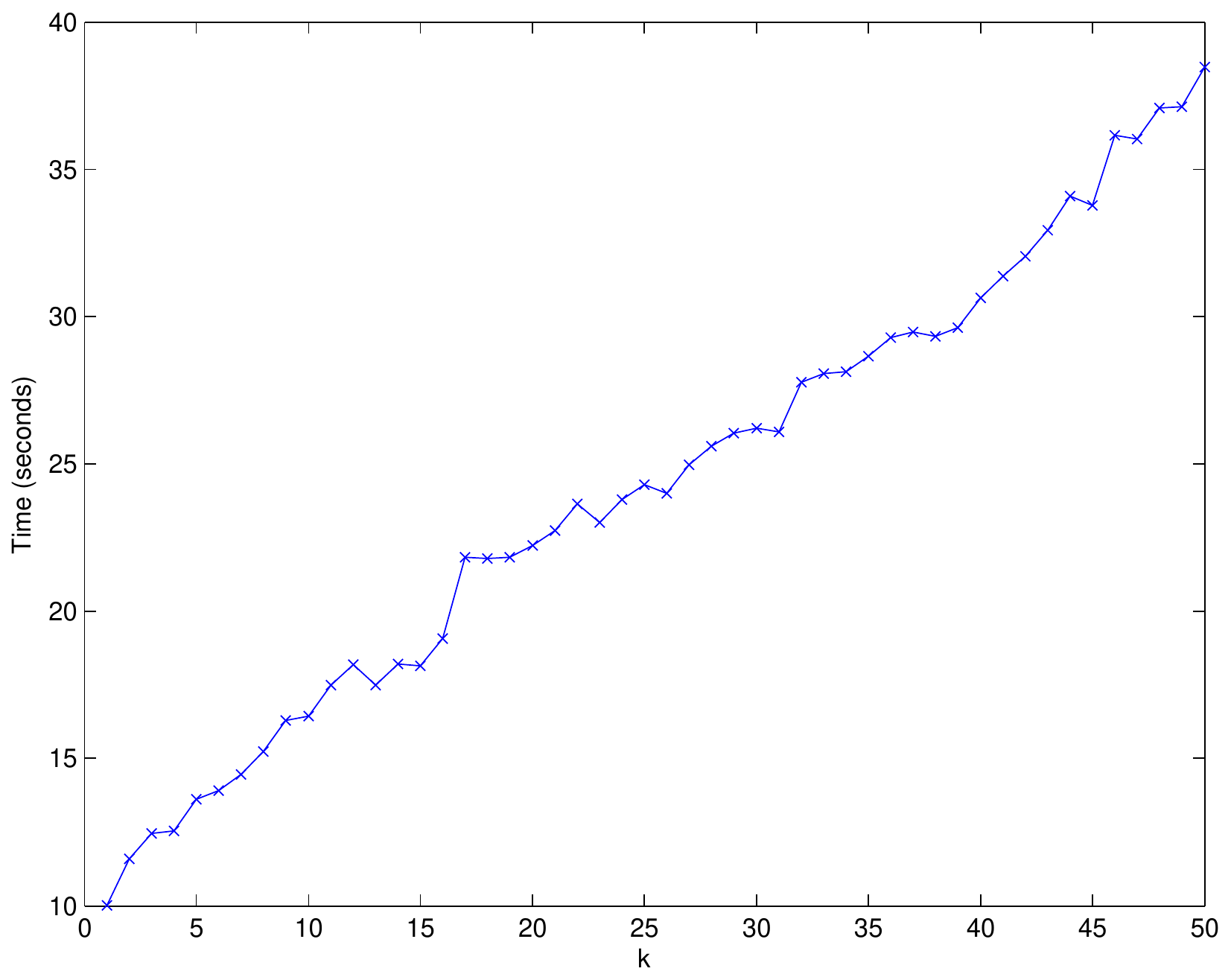}
\end{center}
\caption{5-fold cross validation on Gisette data with robust approximation model.}
\label{fig:gisette_5_fold_cross_validation_time_against_k}
\end{figure}

\begin{table}
\caption{Comparisons of 5-fold cross-validation on real data sets (in CPU time).}
\label{tab:cross_validation}
\begin{center}
{\scriptsize
\begin{tabular}{|c|rrr|}
\hline
Data set         & Original model   & Our model & Saving \\
& (seconds)   & (seconds) & factor \\
\hline
Gisette & 22,082 &  \textbf{39} & 566 \\
20 Newsgroups & 17,731 &  \textbf{65} & 272 \\
RCV1.binary & 17,776 &  \textbf{70.8} & 251 \\
SIAM 2007 & 9,025 &  \textbf{67} & 134 \\
Real-sim & 73,764 &  \textbf{56.3} & 1310 \\
\hline
\end{tabular}
}
\end{center}
\end{table}

\subsection{Statistical performance}
We further evaluate our model on statistical learning performance with binary classification task on both Gisette and RCV1 data sets. RCV1 is a sparse text corpus from Reuters while Gisette is a very dense pixel data. For evaluation metric, we use the F$1$-score on the testing sets. As Figure~\ref{fig:rcv1} and Figure~\ref{fig:gisette} show, the classification performance is equivalent to the full model. As far as time is concerned, the full model requires 5,547.1 CPU seconds while our framework needs 18 seconds for $k = 50$ on RCV1 data set. For Gisette data, the full model requires 991 seconds for training and our framework takes less than 34 seconds.

\begin{figure*}
\centering
\subfigure[Performance on binary classification.]{
\includegraphics[width=0.35\textwidth]{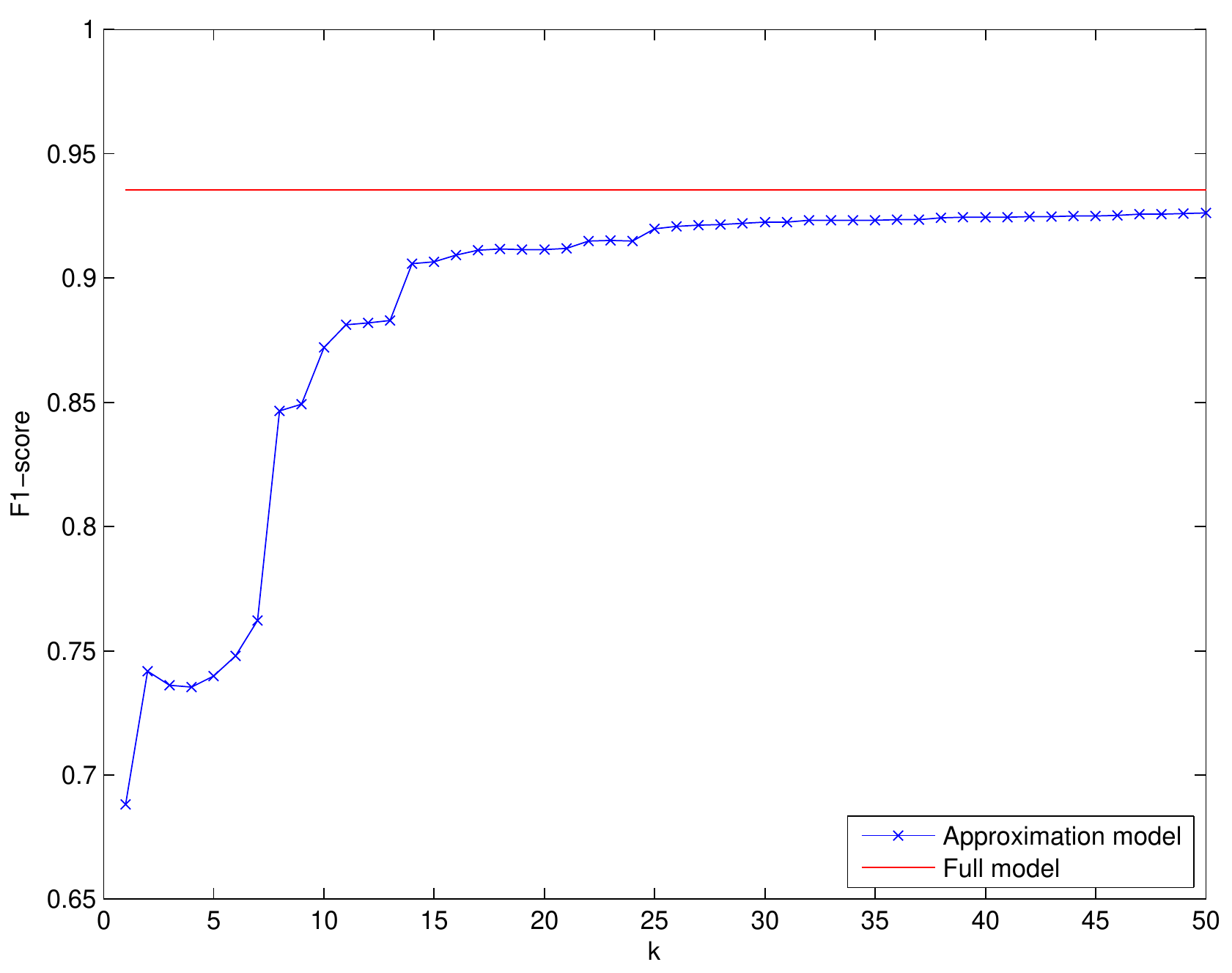}
\label{fig:rcv1_f1_against_k}
}
\subfigure[Training time of our model. The time to train the full model is 5547.1 seconds.]{
\includegraphics[width=0.35\textwidth]{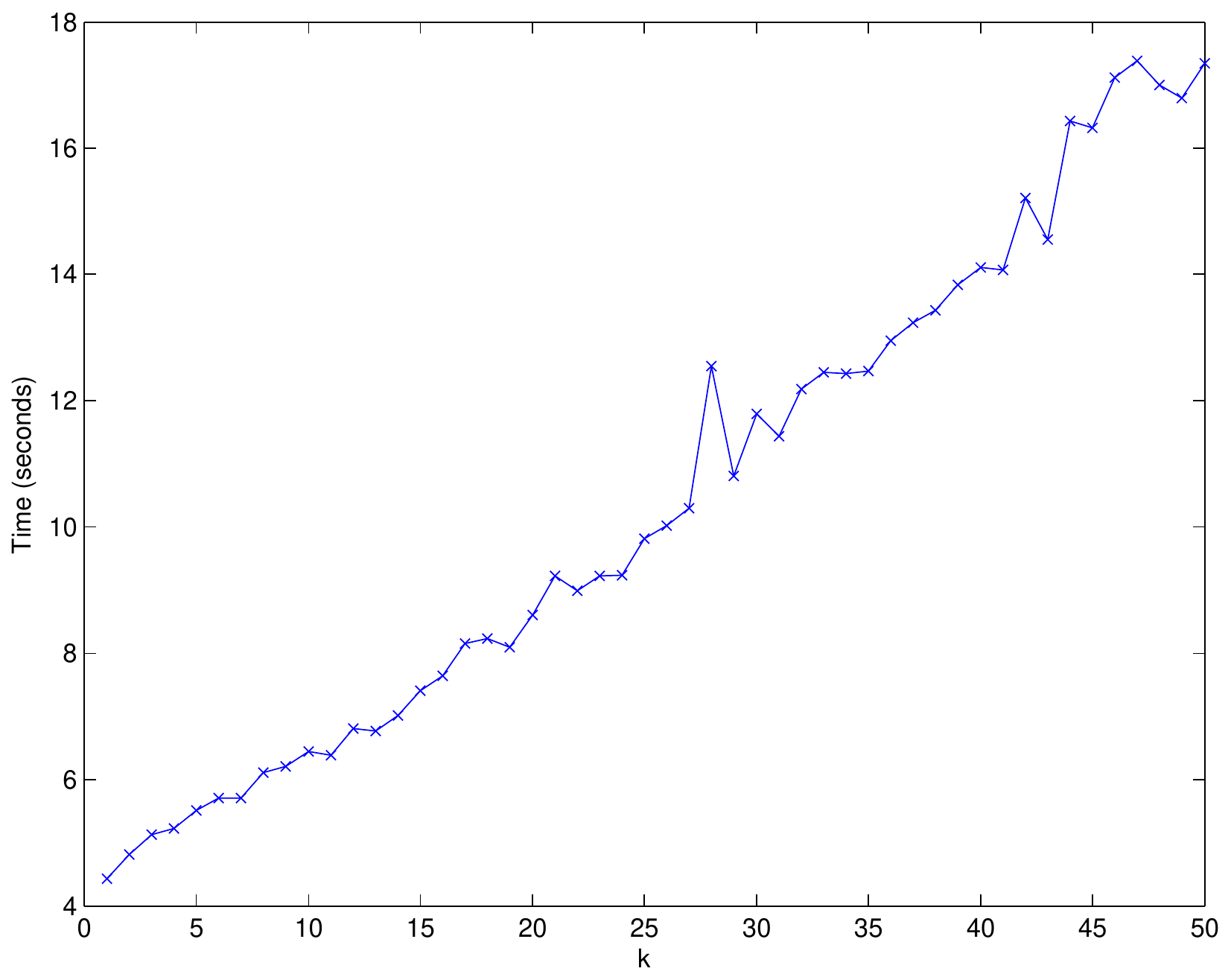}
\label{fig:rcv1_time_against_k}
}
\caption{Classification performance and running time on RCV1 data set.}
\label{fig:rcv1}
\end{figure*}

\begin{figure*}
\centering
\subfigure[Performance on binary classification.]{
\includegraphics[width=0.35\textwidth]{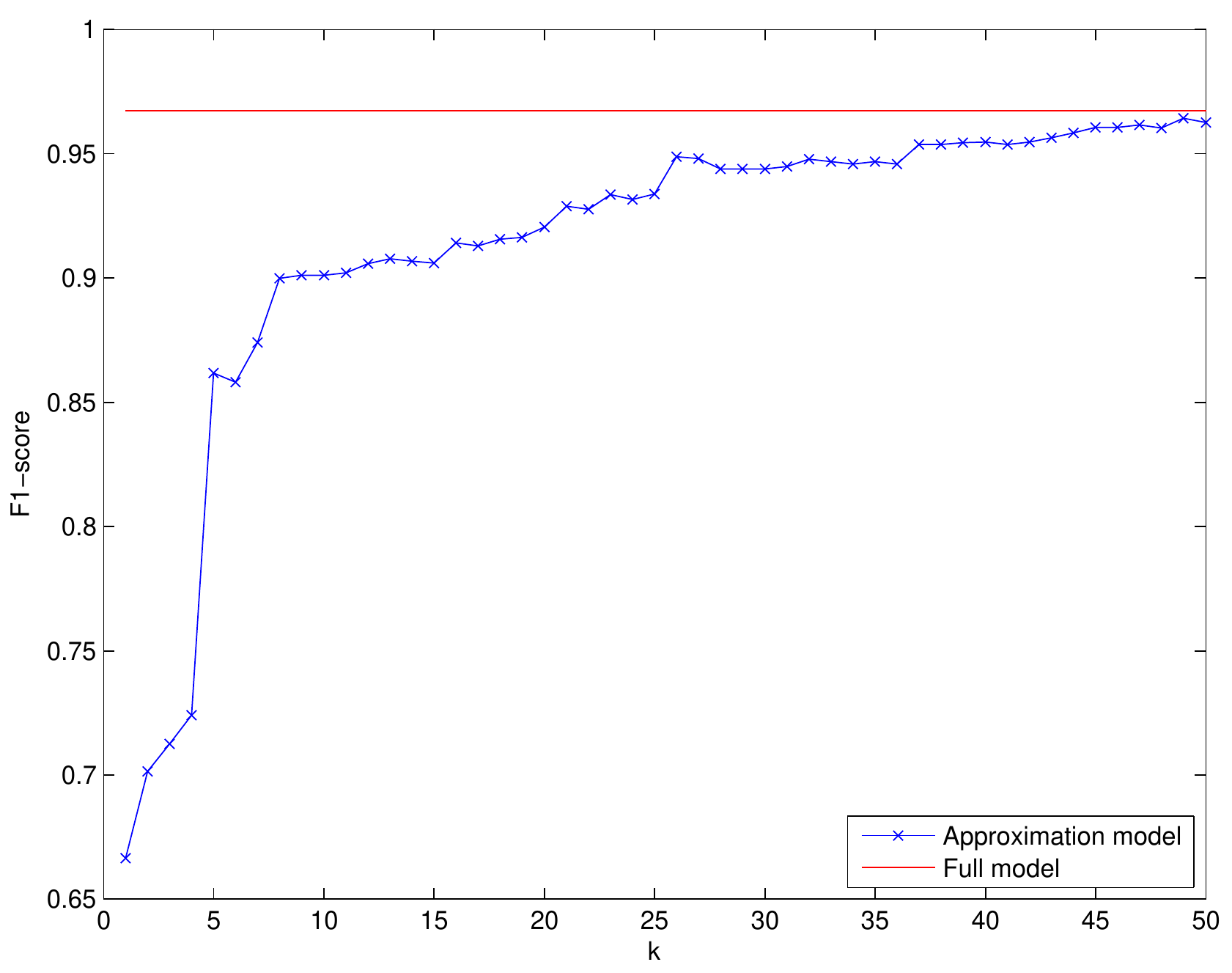}
\label{fig:gisette_f1_against_k}
}
\subfigure[Training time of our model. The time to train the full model is 991 seconds.]{
\includegraphics[width=0.35\textwidth]{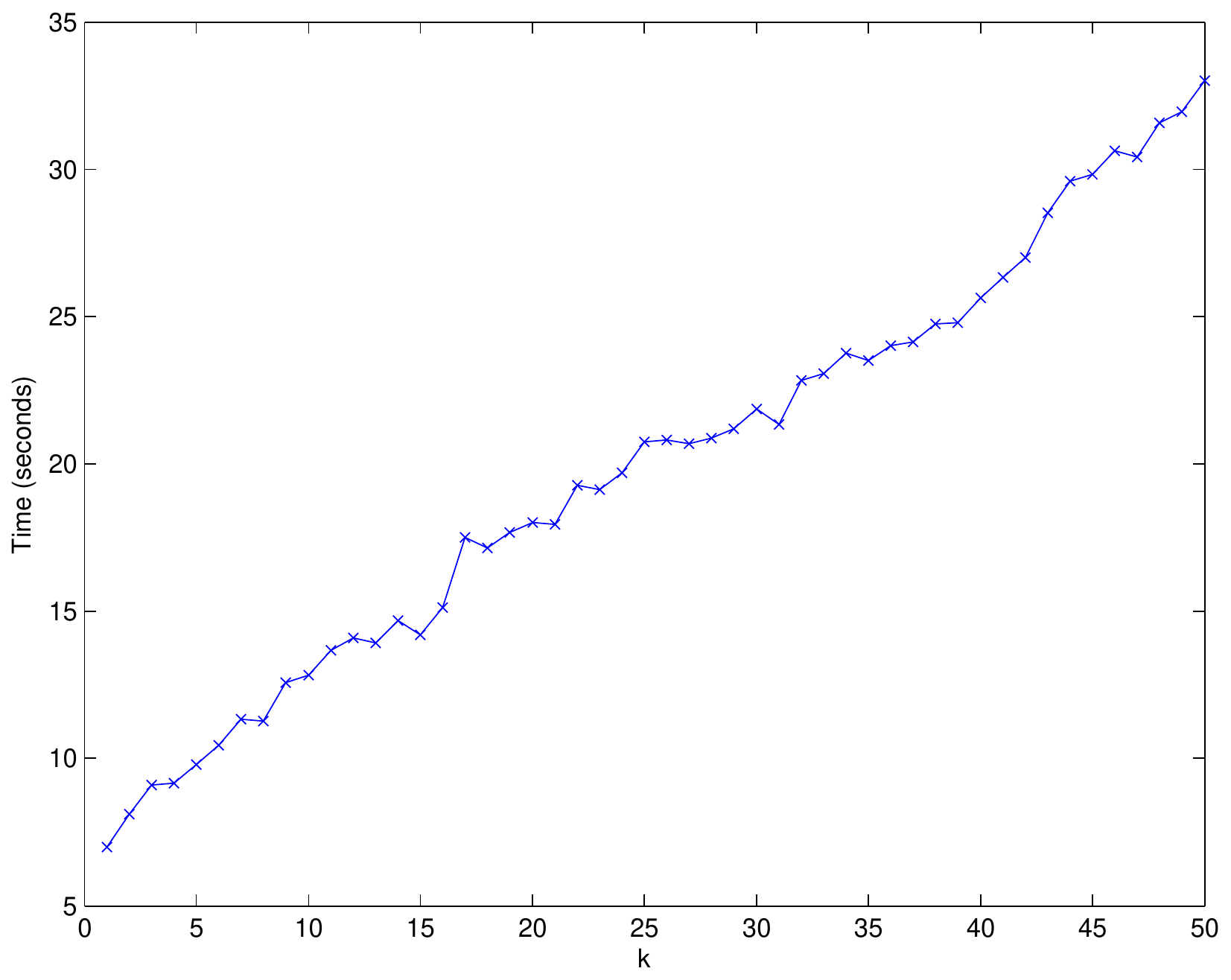}
\label{fig:gisette_time_against_k}
}
\caption{Classification performance and running time on Gisette data set.}
\label{fig:gisette}
\end{figure*}

\subsection{Topic imaging}
One application of solving multiple learning problems is topic imaging. Topic imaging is analogous in spirit to leave-one-out analysis. Instead of leave-one-observation-out, topic imaging removes a feature and runs a LASSO model on the remaining so as to explore the ``neighborhood'' (topic) of the query feature. Data sketching is computed only once for each data set and is shared to answer all queries in parallel.

We experiment our robust sketching model on two large text corpora: NIPS full papers and New York Times articles (\cite{Bache+Lichman:2013}). Table~\ref{tab:nips_papers_topic_imaging} and Table \ref{tab:nytimes_topic_imaging} show the results to sample queries on NIPS and NYTimes as well as the computational time our model takes to answer these queries. In both data sets, our model gives the result in just a few seconds. We can see the topic of Statistics, or Vision (Computer vision) with NIPS (Table~\ref{tab:nips_papers_topic_imaging}) and the theme of Political and Research with NYTimes data (Table~\ref{tab:nytimes_topic_imaging}).

\begin{table*}
\caption{Topic imaging for 5 query words on NIPS papers.}
\label{tab:nips_papers_topic_imaging}
\begin{center}
{\scriptsize
\begin{tabular}{r|lllll}
  Query & LEARNING & STATISTIC & OPTIMIZATION & TEXT & VISION \\
  \hline
  Time (s) & 3.15 & 2.89 & 3.11 & 3.52 & 3.15 \\
  \hline
  1 & error & data & algorithm & trained & object \\
  2 & action & distribution & data & error & image \\
  3 & algorithm & model & distribution & generalization & visiting \\
  4 & targeting & error & likelihood & wooter & images \\
  5 & weighed & algorithm & variable & classifier & unit \\
  6 & trained & parameter & network & student & recognition \\
  7 & uniqueness & trained & mixture & validating & representation \\
  8 & reinforced & likelihood & parame & trainable & motion \\
  9 & control & gaussian & bound & hidden & view \\
  10 & policies & set & bayesian & hmm & field
\end{tabular}
}
\end{center}
\end{table*}

\begin{table*}
\caption{Topic imaging for 5 query words on NIPS papers.}
\label{tab:nytimes_topic_imaging}
\begin{center}
{\scriptsize
\begin{tabular}{r|lllll}
  Query & HEALTH & TECHNOLOGY & POLITICAL & BUSINESS & RESEARCH \\
  \hline
  Time (s) & 11.81 & 11.84 & 10.95 & 11.93 & 10.65 \\
  \hline
  1 & drug & weaving & campaign & companionship & drug \\
  2 & patience & companies & election & companias & patient \\
  3 & doctor & com & presidency & milling & cell \\
  4 & cell & computer & vortices & stmurray & doctor \\
  5 & perceiving & site & republic & marker & percent \\
  6 & disease & company & tawny & customary & disease \\
  7 & tawny & wwii & voted & weaving & program \\
  8 & medica & online & democratic & analyst & tessie \\
  9 & cancer & sites & presidentes & firing & medical \\
  10 & care & customer & leader & billion & studly
\end{tabular}
}
\end{center}
\end{table*}

\section{Concluding Remarks}
We proposed in this paper a robust sketching model to approximate the task of solving multiple learning problems. We illustrate our approach with the square-root LASSO model given a low-rank sketch of the original data set. The numerical experiments suggest this framework is highly scalable, gaining one order of magnitude in computational complexity over the full model.
% For example, we can solve all the leave-one-out problems in the same time as solving one instance.

One interesting direction is to extend this model to a different data approximation, such as sparse plus low-rank (\cite{chandrasekaran2011rank}), in order to capture more useful information while keeping the structures simple in our proposed framework. Our future works also include an analysis and implementation of this framework using first-order techniques for very large-scale problems.

\bibliographystyle{abbrvnat.bst}
\bibliography{all_biblographies}

\end{document}